\newtheorem{defn}{Definition}[section]
\newtheorem{thm}{Theorem}[section]
\newtheorem{lem}[thm]{Lemma}
\newtheorem{prop}[thm]{Proposition}
\begin{document}
	\title{Improving Sharir and Welzl's bound on crossing-free matchings through solving a stronger recurrence}
	\author{Chenchao You}
	\date{}
	\maketitle
	
	\section*{Abstract}
		Sharir and Welzl [1] derived a bound on crossing-free matchings primarily based on solving a recurrence based on the size of the matchings. We show that the recurrence given in Lemma 2.3 in Sharir and Welzl can be improve to \((2n-6s)\textbf{Ma}_{m}(P)\leq\frac{68}{3}(s+2)\textbf{Ma}_{m-1}(P)\) and \((3n-7s)\textbf{Ma}_{m}(P)\leq44.5(s+2)\textbf{Ma}_{m-1}(P)\), thereby improving the upper bound for crossing-free matchings.

	\section{Introduction} 
		A \textit{geometric graph} on a set of points \(P\) on a plane has \(P\) as its vertex set and edges being straight lines. We are interested in the number of various crossing-free geometric graphs such as matchings, spanning cycles and triangulations. Sharir and Welzl [1] derived a upper bound of \(O(10.05^n)\) for matchings in 2006 by \textit{vertical decomposition} and \textit{trapezoidization}. A recurrence is derived on the number of ways to insert and remove edges incident to lower rank. We improve Sharir and Welzl's results by showing a lower bound for the number of ways to insert edges incident to low rank.
		
		Let \(P\) be a set of $n$ points in the plane in general position (no three points on a line and no two points in a vertical line). A crossing-free matching is a graph with pair-wise non-adjacent edges such that no two edges (straight lines) cross each other. We denote \(M\) as one particular configuration of crossing-free matchings. The size of \(M\) is the number of edges in the configuration, denoted as \(|M| = m\). \textit{Trapezoidization} is the process of partitioning the plane into trapezoids; vertical lines are drawn through each vertex in \(P\), blocked by edges in crossing-free matchings to serve as parallel bases. We say a point is \textit{vertically} visible from an edge if the vertical line through the point is segmented by the edge. We first define the \textit{rank} of each vertex in \(P\).
		\begin{defn}
			Given a point set \(P\), a point \(p\in P\), and a matching \(M\) of \(P\), \(d(p)\) which is the rank of \(p\) in \(M\) is zero if \(p\) is not matched in \(M\). If \(p\) is matched as a left (resp., right) endpoint, \(d(p)\) is equal to the number of isolated vertices and left (resp., right) endpoints vertically visible from the interior of the edge via which \(p\) connects.
		\end{defn}
		Let \(M\) be a crossing-free matching of a point set \(P\) in which the point \(p\in P\) is isolated. We denote by \(h_i(p,M)\) the number of ways to insert an edge with \(p\) as an endpoint, such the rank of \(p\) becomes \(i\) after insertion. Let \(H_k(p,M) = \sum_{i=0}^{k}(k-i)h_i(p,M)\). Sharir and Welzl [2] considered \(k=4\) and \(k=5\) for \(H_k\) and proved the following lemma:
		\begin{lem}
			Denote by \(h_i(p,M)\) the number of ways to insert an edge with \(p\) as an endpoint in a crossing-free matching \(M\) of given point set \(P\), such that the rank of \(p\) becomes \(i\). \(\sum_{i=0}^{4}(4-i)h_i(p,M)\leq 24\) and \(\sum_{i=0}^{5}(5-i)h_i(p,M)\leq 48\) for any given set of \((p,M)\).
		\end{lem}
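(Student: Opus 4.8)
The plan is to recast the weighted sum as a cumulative count of low-rank insertions and then to bound that count geometrically. Writing $N_{\le j}(p,M)=\sum_{i=0}^{j}h_i(p,M)$ for the number of crossing-free edges one may add at $p$ whose insertion gives $p$ rank at most $j$, the elementary identity
\[
\sum_{i=0}^{k}(k-i)h_i(p,M)=\sum_{j=0}^{k-1}\sum_{i=0}^{j}h_i(p,M)=\sum_{j=0}^{k-1}N_{\le j}(p,M)
\]
reduces the first inequality to bounding $N_{\le 0},\dots,N_{\le 3}$ and the second to additionally bounding $N_{\le 4}$. Moreover the reflection $x\mapsto -x$ exchanges left and right endpoints while preserving crossing-freeness, vertical visibility, and hence rank; since the intended bound must hold for every point set, the right-endpoint insertions at $(p,M)$ are controlled by the same constant via the mirror image. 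It therefore suffices to bound the left-endpoint contribution and double it.

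First I would fix $p$ as a left endpoint and describe the candidate edges. A valid insertion is a segment $pq$ with $q$ an isolated point to the right of $p$ for which $pq$ crosses no edge of $M$. For a given $q$, the trapezoidal decomposition of $M\cup\{pq\}$ makes the rank transparent: the vertices counted are exactly the isolated vertices and left endpoints lying in the trapezoids immediately above and immediately below $pq$, i.e.\ those whose vertical segment to $pq$ is unobstructed. I would enumerate the candidate $q$ in a suitable sweep order (angular order of $\overrightarrow{pq}$, or the $x$-coordinate of $q$) so that the visible-vertex set evolves in a controlled way, and the aim is to show that only a few candidates can have a small number of visible same-type vertices.

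The heart of the argument is a monotonicity-plus-charging statement under this sweep. I would try to show that, as the inserted edge advances, the set of same-type vertices vertically visible from $pq$ is (weakly) increasing except at a bounded number of transition events — the moments when the moving segment begins to lean on a new edge of $M$ or passes a vertex, so that the decomposition changes. If so, the candidates of rank at most $j$ are confined to the early portion of each ``window'' between consecutive events, and the number of windows (together with the symmetric contribution from vertices visible \emph{above} the edge) is bounded by a constant $c$ determined by the edges of $M$ bounding the pocket of $p$, giving a bound of the shape $N_{\le j}^{\mathrm{left}}\le c\,(j+1)$. Summing over $j<k$ and doubling for the right-endpoint case then yields the two constants, after sharpening $c$ and the per-window accounting so that the totals come out to exactly $24$ and $48$ rather than a weaker multiple.

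The main obstacle, I expect, is making the sweep rigorous across the transition events. When $pq$ passes a vertex or switches the edge it rests against, the visible set can both gain and lose elements, so the rank need not be literally monotone; the real work is to rule out the rank dropping — and thereby admitting many extra low-rank candidates — more than a bounded number of times, and to forbid long stretches where the rank stalls at a small value. Doing this while simultaneously tracking the vertices visible above the edge, which evolve by a symmetric but interleaved process, and carrying out the exact constant bookkeeping needed to land on $24$ and $48$, is where the difficulty concentrates.
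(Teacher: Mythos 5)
The paper does not actually prove this lemma: it is imported verbatim from Sharir and Welzl's paper (the reference [1]/[2]), and the only trace of its proof here is the one-sided bound $\sum_{i=0}^{3}(3-i)l_i(p,M)\le 6$ quoted inside Proposition 2.1 and the bifurcation/trapezoid-tree machinery of Section 2. So your attempt cannot be matched against an in-paper argument; it has to stand on its own, and as written it does not. Your two reductions are fine: the identity $\sum_{i=0}^{k}(k-i)h_i=\sum_{j=0}^{k-1}N_{\le j}$ is correct, and splitting into left and right insertions and doubling is exactly how the constants $24=2\cdot 12$ and $48=2\cdot 24$ arise in the source. But the entire quantitative content of the lemma is deferred to the claim $N_{\le j}^{\mathrm{left}}\le c\,(j+1)$, which you only conjecture, and which is in fact false in the shape you need. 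The left-going candidates form a binary branching structure in the trapezoidal decomposition (each trapezoid is left-adjacent to at most two others, separated at a bifurcation point), so $N_{\le j}$ grows roughly like $2^j$ rather than linearly: the extremal profile $(l_0,l_1,l_2,l_3)=(0,0,4,4)$, which the paper itself lists as realizing $L_3=12$, has $N_{\le 0}=N_{\le 1}=0$, $N_{\le 2}=4$, $N_{\le 3}=8$. Any uniform linear cap $c(j+1)$ must have $c\ge 2$ to accommodate $N_{\le 3}=8$, but then $\sum_{j=0}^{3}c(j+1)=10c\ge 20$ overshoots the required one-sided total of $12$. So a per-level bound cannot give the constants; the real proof must exploit the trade-off between levels (few low-rank insertions force the branching to be spent early, limiting later levels), which is a recursion on the bifurcation tree, not a sweep with monotone windows.

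The second, independent problem is the one you flag yourself: the monotonicity of the visible set under an angular or $x$-sweep is not established, and you concede the rank can both rise and fall at transition events. Since the constants $24$ and $48$ are exactly what the rest of the paper refines, an argument that lands on ``some constant $c$ times a window count'' is not a proof of this lemma. To repair the approach, replace the sweep by the structure the paper already sets up in Definitions 2.1--2.2: organize the left-reachable isolated points as a tree rooted at $p$ whose branchings occur at bifurcation points, observe that reaching a point at depth with $i$ chargeable vertices passed contributes to $l_i$, and prove the recursion $S_k:=\sum_{i=0}^{k}(k-i)l_i = S_{k-1}+N_{\le k-1}$ together with a bound on $N_{\le k-1}$ in terms of $S_{k-1}$ coming from the branching (this is what yields $6,12,24$ for $k=3,4,5$). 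Without that step the proposal is a plan, not a proof.
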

		We denote by \(\textbf{Ma}_{m}(P)\) the number of crossing-free matchings \(M\) with \(m\) edges that can be embedded over the point set \(P\) of size \(n\). Let \(s\) denote the number of isolated vertices in \(M\) and notice that \(s = n - 2m\). Sharir and Welzl also showed the following proposition:
		\begin{prop}
			Denote by \(v_i(M)\) the number of points with rank \(i\) in a crossing-free matching \(M\) of a given point set \(P\). \(\sum_{i=0}^{4}(4-i)v_i(M)\geq 2n-6s\) and \(\sum_{i=0}^{5}(5-i)v_i(M)\geq 3n-7s\).
		\end{prop}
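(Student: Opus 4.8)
The plan is to reduce both inequalities to a single upper bound on the total rank $\sum_{p\in P} d(p)$, and then to obtain that bound by a double-counting of vertical-visibility incidences.

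First I would rewrite the truncated weighted sum. Writing $v_i=v_i(M)$ and using $\sum_{i\ge 0} v_i = n$ together with $\sum_{i\ge 0} i\,v_i=\sum_{p\in P} d(p)$, I get for each $k$
\begin{equation}
\sum_{i=0}^{k}(k-i)v_i=\Big(kn-\sum_{p\in P}d(p)\Big)+\sum_{i>k}(i-k)v_i\ \ge\ kn-\sum_{p\in P}d(p),
\end{equation}
since the tail $\sum_{i>k}(i-k)v_i$ is nonnegative. Hence it suffices to prove $\sum_{p\in P} d(p)\le 2n+2s$: this gives $\sum_{i=0}^{4}(4-i)v_i\ge 4n-(2n+2s)=2n-2s\ge 2n-6s$ and $\sum_{i=0}^{5}(5-i)v_i\ge 5n-(2n+2s)=3n-2s\ge 3n-7s$, both final steps using $s\ge 0$.

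Next I would bound the total rank. Let $L,R,I$ be the sets of left endpoints, right endpoints, and isolated points, so $|L|=|R|=m$, $|I|=s$, and $2m+s=n$. By the definition of rank, for a left endpoint $p$ the value $d(p)$ counts the points of $I\cup L$ whose vertical line is segmented by the interior of the edge $e_p$ through $p$. Summing over all left endpoints (equivalently, over all $m$ edges) and exchanging the order of summation yields $\sum_{p\in L} d(p)=\sum_{q\in I\cup L} a(q)$, where $a(q)$ denotes the number of matching edges whose interior segments the vertical line through $q$. The key geometric fact is $a(q)\le 2$: in the trapezoidal decomposition the upward vertical ray from $q$ is blocked by at most one edge and the downward ray by at most one edge, and by general position (no two points on a common vertical line) such a ray meets only edge interiors, never a vertex. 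Therefore $\sum_{p\in L} d(p)\le 2|I\cup L|=2(s+m)$, and symmetrically $\sum_{p\in R} d(p)=\sum_{q\in I\cup R} a(q)\le 2(s+m)$. Adding these and using $2m=n-s$ gives
\begin{equation}
\sum_{p\in P} d(p)=\sum_{p\in L} d(p)+\sum_{p\in R} d(p)\le 4(s+m)=2n+2s,
\end{equation}
which is exactly the bound required above.

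The step I expect to be the main obstacle is the clean justification of $a(q)\le 2$, together with the bookkeeping that an isolated point is charged to the ranks of both left and right endpoints (hence the factor $2$ on $|I|$), whereas each matched endpoint is charged through only one of the two sums. Concretely, I must argue that ``vertically visible from the interior of $e_p$'' is the \emph{direct} trapezoidal adjacency relation, meaning a point sees an edge only when that edge is the first one met by its upward or downward vertical ray, rather than a mere overlap of $x$-ranges; otherwise $a(q)$ could be as large as $m$ and the total rank would fail to be linearly bounded, making the proposition false. Once this interpretation is pinned down, the summation exchange and the arithmetic $2n+2s\le 2n+6s$ and $2n+2s\le 2n+7s$ are routine, and the argument in fact delivers the stronger constants $2n-2s$ and $3n-2s$.
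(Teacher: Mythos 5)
The paper does not actually prove this proposition; it is quoted from Sharir and Welzl without argument, so there is no in-paper proof to compare against. Judged on its own, your proof is correct under the paper's definitions, and it is essentially the canonical argument: reduce the truncated weighted sum to an upper bound on the total rank via $\sum_{i=0}^{k}(k-i)v_i \ge kn - \sum_p d(p)$, then bound the total rank by double-counting point--edge visibility incidences, using that in the trapezoidal decomposition the vertical wall through a point is blocked by at most one edge above and one below, so $a(q)\le 2$. Your insistence on reading ``vertically visible'' as direct trapezoidal adjacency rather than mere overlap of $x$-ranges is exactly right, and is forced: under the weaker reading the total rank can be superlinear and the proposition fails. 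One bookkeeping observation: you obtain the stronger constants $2n-2s$ and $3n-2s$ because you let $v_i$ count all points, so the $s$ isolated points (rank $0$) each contribute $k$ to the left-hand side. If instead $v_i$ counts only matched points --- which is evidently the reading intended in Sharir and Welzl, since then $\sum_i v_i = n-s$ and your identical incidence count $\sum_p d(p)\le 2n+2s$ yields $4(n-s)-(2n+2s)=2n-6s$ and $5(n-s)-(2n+2s)=3n-7s$ exactly --- the stated constants drop out of your argument with no extra work. Either way the inequalities as stated follow, so the proof stands.
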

		We can derive a matching of size \(m\) by inserting an edge into a matching of size \(m-1\). Conversely, a matching of size \(m-1\) can be obtained by removing an edge from a matching of size \(m\). We say that an edge is incident to low rank if the endpoints of the edge has low rank. Sharir and Welzl argued that if we restrict to only insert and remove an edge that is incident to low rank, we can combine Lemma 1.1 and Proposition 1.2 to obtain a recurrence on \(\textbf{Ma}_{m}(P)\) as shown in the following lemma:
		\begin{lem}
		Let \(\textbf{Ma}_{m}(P)\) the number of crossing-free matchings \(M\) with \(m\) edges that can be embedded over the point set \(P\) of size \(n\). We have the following inequalities:
		\[(2n - 6s)\textbf{Ma}_{m}(P)\leq \sum_{p,M \atop |M|=m-1}\sum_{i=0}^{4}(4-i)h_i(p,M)\leq 24(s+2)\textbf{Ma}_{m-1}(P),\]
		\[(3n - 7s)\textbf{Ma}_{m}(P)\leq \sum_{p,M \atop |M|=m-1}\sum_{i=0}^{5}(5-i)h_i(p,M)\leq 48(s+2)\textbf{Ma}_{m-1}(P),\]
		\begin{equation*}
			\textbf{Ma}_{m}(P)\leq
			\begin{cases}
			\frac{24(s+2)}{2n-6s}\textbf{Ma}_{m-1}(P)\\
			\frac{48(s+2)}{3n-7s}\textbf{Ma}_{m-1}(P)\\
			\end{cases}.
		\end{equation*}
		\end{lem}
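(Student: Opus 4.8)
The plan is to establish the two two-sided inequalities by a single double-counting scheme and then read off the recurrence by dividing. Throughout I treat the cases $k=4$ and $k=5$ in parallel, with weights $k-i$, so that the middle expression is $\sum_{p,M}\sum_{i=0}^{k}(k-i)h_i(p,M)$ in both rows and the two bounds differ only in the constants supplied by Lemma 1.1 and Proposition 1.2.

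The right-hand inequality is the easy direction. For a fixed admissible pair $(p,M)$ with $|M|=m-1$ and $p$ isolated in $M$, the inner sum is exactly $H_k(p,M)=\sum_{i=0}^{k}(k-i)h_i(p,M)$, which Lemma 1.1 bounds by $24$ for $k=4$ and by $48$ for $k=5$. It then remains to count the index set of the outer sum: a matching of size $m-1$ has $n-2(m-1)=s+2$ isolated vertices, so there are precisely $(s+2)\textbf{Ma}_{m-1}(P)$ admissible pairs $(p,M)$. Multiplying the per-pair bound by the number of pairs yields the upper bounds $24(s+2)\textbf{Ma}_{m-1}(P)$ and $48(s+2)\textbf{Ma}_{m-1}(P)$.

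For the left-hand inequality I would reindex the middle sum by the matching it produces rather than by the matching it starts from. Inserting an edge $e$ with endpoint $p$ into $M$ produces a size-$m$ matching $M'=M\cup\{e\}$ in which $p$ is matched with some rank $i$; conversely, every size-$m$ matching $M'$ together with a choice of matched endpoint $p$ of rank $i$ arises this way from the unique $M=M'\setminus\{e\}$, where $e$ is the edge of $M'$ at $p$, and the rank produced upon re-insertion equals the rank of $p$ in $M'$ because rank is a local property of the edge. This gives the identity $\sum_{p,M:\,|M|=m-1}h_i(p,M)=\sum_{M':\,|M'|=m}(\text{number of matched endpoints of rank }i)$, i.e. the matched part of $v_i(M')$. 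Weighting by $(k-i)$, summing over $i$, and applying Proposition 1.2 to each individual $M'$ then bounds the middle sum below by $(2n-6s)\textbf{Ma}_m(P)$ for $k=4$ and by $(3n-7s)\textbf{Ma}_m(P)$ for $k=5$. Chaining the two directions and dividing by the positive coefficients $2n-6s$ and $3n-7s$ delivers the stated recurrence.

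The step I expect to be most delicate is the bookkeeping inside this bijection. First I must confirm that each edge of $M'$ is removed once for each of its two endpoints, so that $M'$ contributes to the sum exactly twice, at the two ranks its endpoints carry; this is what makes the insertion count and the matched-endpoint count agree term by term. Second, and more sensitively, I must track the $i=0$ term: the isolated vertices of $M'$ sit in $v_0(M')$ but correspond to no insertion, so I need to verify that removing them from the $i=0$ contribution still leaves the sum bounded below by Proposition 1.2's constant $2n-6s$ (resp. $3n-7s$), rather than a weaker value. Finally I would note the implicit hypothesis $2n-6s>0$ (resp. $3n-7s>0$), which is exactly what legitimizes the division in the last step and delimits the range of $m$ to which the recurrence applies.
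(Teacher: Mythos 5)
Your double-counting scheme is exactly the intended argument: the paper states this lemma without proof, quoting it from Sharir and Welzl [1], and their proof is the same reindexing of the middle sum by the resulting size-$m$ matching that you describe. The upper bound is complete as you wrote it, since a matching of size $m-1$ has $n-2(m-1)=s+2$ isolated vertices and Lemma 1.1 bounds each inner sum by $24$ (resp.\ $48$).

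The one point you flagged but did not close is a real one, and taken at face value it breaks the constant. After reindexing, the middle sum equals $\sum_{M'}\sum_{i=0}^{k}(k-i)\cdot(\text{number of \emph{matched} points of rank } i \text{ in } M')$, whereas Proposition 1.2 as stated counts all points of rank $i$, and the $s$ isolated vertices of $M'$ each sit at rank $0$ and contribute $k$ apiece. Subtracting them naively gives only $2n-6s-4s=2n-10s$ (resp.\ $3n-7s-5s=3n-12s$), which is weaker than claimed. The resolution is that the constants $2n-6s$ and $3n-7s$ in Proposition 1.2 are really the bounds for the matched points alone: the underlying count is that each isolated vertex or left endpoint is vertically visible from at most two edge interiors (one above, one below), so the total rank of all matched points is at most $4(s+m)=2n+2s$, whence $\sum_{\text{matched }p}(k-\min(d(p),k))\ \geq\ k(n-s)-(2n+2s)$, which equals $2n-6s$ for $k=4$ and $3n-7s$ for $k=5$. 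Equivalently, the version of Proposition 1.2 that includes isolated vertices actually gives the stronger bounds $2n-2s$ and $3n-2s$, so that discarding the isolated vertices' contribution of $ks$ lands exactly on the stated constants. With that observation supplied, your chain of inequalities is correct, and the final division is legitimate precisely when $2n-6s>0$ (resp.\ $3n-7s>0$), i.e.\ for $m$ large enough, as you note.
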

		Sharir and Welzl solved the recurrence on \(\textbf{Ma}_{m}(P)\) above to obtain the upper bound of \(O(10.05^n)\) for matchings. We improve on their results by deriving the following stronger upper bounds and thus obtaining a more powerful recurrence:
		\begin{equation}
			\sum_{p,M \atop |M|=m-1}\sum_{i=0}^{4}(4-i)h_i(p,M)\leq \frac{68}{3}(s+2)\textbf{Ma}_{m-1}(P)
		\end{equation}
		\begin{equation}
		\sum_{p,M \atop |M|=m-1}\sum_{i=0}^{5}(5-i)h_i(p,M)\leq 44.5(s+2)\textbf{Ma}_{m-1}(P).
		\end{equation}
		To obtain the improved upper bounds above, we look into the specific values of \(H_k(p,M)\) over the set of \((p,M)\). In particular, we denote each pair of \(p\) and \(M\) a ving (Vertex in Graph) and show that there are many vings \((p,M)\) such that \(H_k(p,M)\) is strictly less than the bounds given in Lemma 1.1. This lowers the sum of \(\sum_{i=0}^{k}(k-i)h_i(p,M)\) over the set of all \((p,M)\) and therefore improves the bounds in Lemma 1.3.

	\section{Setup and Constellation}
		In this section, we look at the different values of \(H_k(p,M)\) and the specific configurations that give the value of \(H_k(p,M)\). Notice that a point \(p\) can either connect to the left as a right endpoint, or it can connect to the right as a left endpoint. Thus, it is reasonable to split \(H_k(p,M)\) into \(H_k(p,M) = L_k(p,M) + R_k(p,M)\), where \(L_k(p,M)\) counts only the number of ways to insert an edge that emanates from \(p\) to the left , and \(R(p,M)\) counts only to the right. It is evident that \(L_k(p,M)\) and \(R_k(p,M)\) are symmetric, thus it suffices to consider points and edges only on one side to enumerate all possible configurations. We begin by a few definitions to setup the argument.
		\begin{defn}
			We say a point \(p\) sees a point \(q\) if we can move from \(p\) through vertical walls to adjacent trapezoids until we reach a vertical wall determined by the point \(q\). Notice that the edge \((p,q)\) may intersect other edges.
		\end{defn}
		
		\begin{defn}
			Under vertical decomposition of a matching \(M\) of a point set \(P\) and for any \(p\in P\), a left (resp., right) bifurcation point of \(p\) is the first point that an isolated point \(p \in P\) sees to the left (resp., right).
		\end{defn}
		
		\begin{prop}
			For any matching \(M\), if the left bifurcation point of \(p\) is isolated, then \(L_3(p,M)\leq 10\) and \(L_4(p,M)\leq 17\). By symmetry the bounds also apply to \(R_3(p,M)\) and \(R_4(p,M)\).
		\end{prop}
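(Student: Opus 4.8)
The plan is to work entirely on the left side, since the right side follows by the stated symmetry, and to reduce the bound to a finite enumeration of the local configurations immediately to the left of $p$. Writing $\ell_i(p,M)$ for the number of left-insertions after which $p$ has rank exactly $i$, we have $L_3 = 3\ell_0 + 2\ell_1 + \ell_2$ and $L_4 = 4\ell_0 + 3\ell_1 + 2\ell_2 + \ell_3$, so it suffices to bound the individual counts $\ell_0,\ell_1,\ell_2,\ell_3$ and then combine them. First I would make precise the set of admissible left-partners: an isolated point $q$ lying to the left of $p$ for which the segment $pq$ is crossing-free in $M$, and I would read off the rank of the resulting edge $(p,q)$ as the number of isolated vertices and right endpoints vertically visible from its interior in $M\cup\{(p,q)\}$, in the sense fixed by Definition 1.1.

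The key structural step is a monotonicity statement: as an admissible partner $q$ recedes from $p$ (equivalently, as the inserted edge sweeps across more of the vertical decomposition), the rank of $(p,q)$ is weakly increasing, and a new vertically visible vertex is gained each time the edge passes a vertical wall emanating from an isolated vertex or a right endpoint. This confines the partners of rank at most $4$ to a bounded neighbourhood of $p$, a staircase of trapezoids reachable from $p$ through the walls of Definition 2.1, and shows that each rank $i$ is attained by only a bounded number of partners. I would encode each admissible arrangement of these trapezoids, together with the isolated-or-matched type of the vertices bounding them, as a constellation, and argue from the monotonicity that only finitely many constellations can produce ranks $0$ through $4$.

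Next I would invoke the hypothesis that the left bifurcation point $b$ of $p$, the first point $p$ sees to the left in the sense of Definition 2.2, is isolated. The role of this hypothesis is to cap the low-rank counts: when $b$ is isolated, the trapezoid immediately to the left of $p$ is bounded by $b$ itself, so the rank-$0$ and rank-$1$ insertions must pass through $b$ and its two incident walls, rather than through a matched edge whose two endpoints would split the neighbourhood into two independent low-rank branches and double the cheap contributions. I expect this to yield small explicit caps (for instance $\ell_0$ bounded by one and $\ell_1,\ell_2,\ell_3$ each bounded by a small constant), and these are exactly the inputs that separate the conditional bounds $10$ and $17$ here from the weaker unconditional estimates underlying Lemma 1.1.

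Finally I would run through the surviving constellations, record the realized ranks and their multiplicities, and maximize $L_3$ and $L_4$ over them; the worst admissible constellation should give precisely $L_3 = 10$ and $L_4 = 17$. The main obstacle is the enumeration in the middle step: proving that the constellation list is complete, so that no admissible configuration is overlooked, and checking that the rank genuinely behaves monotonically when the sweeping edge crosses a matched endpoint rather than an isolated vertex, since an endpoint contributes to the rank only on the side on which it is a right endpoint. Handling that asymmetry correctly, and verifying that the isolated-$b$ hypothesis prunes exactly the constellations that would otherwise exceed $10$ and $17$, is where the real work lies.
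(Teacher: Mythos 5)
There is a genuine gap, and it starts with the identification of the quantity to be bounded. You read \(L_3\) as \(3\ell_0+2\ell_1+\ell_2\) and \(L_4\) as \(4\ell_0+3\ell_1+2\ell_2+\ell_3\); but the paper (see the heading of Section 3.1, where \(H_3\) denotes \(4h_0+3h_1+2h_2+h_3\)) intends \(L_3=\sum_{i=0}^{4}(4-i)l_i\) and \(L_4=\sum_{i=0}^{5}(5-i)l_i\), and that is what its proof bounds by \(10\) and \(17\). Under your reading the proposition would be vacuous: Sharir and Welzl's unconditional one-sided bounds already give \(\sum_{i=0}^{3}(3-i)l_i\leq 6\) and \(\sum_{i=0}^{4}(4-i)l_i\leq 12\), both below your targets, so no hypothesis on the bifurcation point would be needed. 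That mismatch is a sign that the enumeration you plan to run would be aimed at the wrong object.

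The second, more substantive gap is that your argument is a plan for a case analysis whose two hard steps --- the monotonicity of rank along the visibility tree (which, as you note yourself, is delicate because a matched endpoint contributes to the rank only on one side) and the completeness of the constellation list --- are deferred rather than carried out. The paper avoids the enumeration entirely with a one-line reduction you do not use: since the left bifurcation point \(q\) is the \emph{first} point \(p\) sees to the left, the vertical wall through \(q\) is crossed by the interior of every left-going edge from \(p\) except \(pq\) itself, and since \(q\) is isolated it contributes \(+1\) to the rank of each such edge. Deleting \(q\) therefore decrements every positive rank by exactly one, giving \(l_k(p,M)\leq l_{k-1}(p,M')\) for \(k\geq 1\) where \(M'\) is the same matching on \(P\setminus\{q\}\); combined with \(l_0(p,M)\leq 1\) (only the edge \(pq\) can give rank \(0\)) and the already-known bounds \(\sum_{i=0}^{3}(3-i)l_i\leq 6\) and \(\sum_{i=0}^{4}(4-i)l_i\leq 12\) applied to \(M'\), one gets \(\sum_{i=0}^{4}(4-i)l_i(p,M)\leq 4+6=10\) and \(\sum_{i=0}^{5}(5-i)l_i(p,M)\leq 5+12=17\). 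You do isolate the correct ingredient \(\ell_0\leq 1\), but without the rank-shift observation your route requires redoing the full constellation classification that the cited bounds were designed to encapsulate, and as written that classification is not supplied.
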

			\begin{proof}
				If the left bifurcation point \(q\) of \(p\) is isolated, suppose that if we insert an edge \(e\) emanating from \(p\) to the left then \(p\) gets rank of \(k\geq 1\). Then after removing \(q\), \(e\) gives \(p\) a rank of \(k-1\). Notice that \(l_0(p,M)\leq 1\) since we can only connect \(p\) to \(q\) to have the \(d(p)=0\). Sharir and Welzl proved that \(\sum_{i=0}^{3}(3-i)l_i(p,M)\leq 6\) and we obtain the recurrence
				\[\sum_{i=0}^{4}(4-i)l_i(p,M)\leq \sum_{i=0}^{3}(3-i)l_i(p,M) + 4l_0(p,M)\leq \sum_{i=0}^{3}(3-i)l_i(p,M) + 4\leq 10 .\]
				Similarly,
				\[\sum_{i=0}^{5}(5-i)l_i(p,M)\leq \sum_{i=0}^{4}(4-i)l_i(p,M) + 5l_0(p,M)\leq \sum_{i=0}^{4}(4-i)l_i(p,M) + 5\leq 17.\]
			\end{proof}
		
		\begin{lem}
				Consider a matching \(M\) of a point set \(P\) and points \(a,b\in P\) such that \(a\) and \(b\) have the same left (resp., right) bifurcation point \(p\). Without loss of generality consider \(a\) above \(b\) (having a larger \(y\)-coordinate). There exist at least one edge \(e\) directly below \(a\) and at least one edge \(e'\) directly above \(b\) (Notice that \(e\) and \(e'\) may coincide). The choice of the left (resp., right) endpoint of \(e\) and \(e'\) are unique in \(P\) given the \(M\) and the position of point \(a\) and point \(b\).
		\end{lem}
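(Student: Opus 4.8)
The plan is to work entirely on the left side, the right side following by the symmetry between $L_k$ and $R_k$ already noted, and to read off $e$ and $e'$ directly from the vertical decomposition of $M$. First I would fix notation: since $a$ and $b$ are isolated and each has $p$ as its left bifurcation point, I trace, for each of them, the sequence of trapezoids crossed when moving left from the point until the wall determined by $p$ is reached, as in Definition 2.1. I read ``the edge directly below $a$'' as the bottom edge of the trapezoid whose right wall passes through $a$, equivalently the first edge met by the downward vertical ray from $a$, and symmetrically ``directly above $b$'' as the top edge of the trapezoid whose right wall passes through $b$. With this reading, the existence of $e$ and $e'$ is exactly the assertion that these two rays meet an edge rather than escaping to infinity.

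The core step is to show that a common first-seen point $p$ for two distinct points at different heights forces the leftward corridors of $a$ and of $b$ to branch at $p$. I would argue that the wall determined by $p$ can be the left boundary of two distinct trapezoids reached from the right only when $p$ carries an edge on its right side (that is, $p$ is a left endpoint) or when the corridors pass on opposite sides of the edges that cap $p$'s wall above and below; in either situation general position, no two points on a common vertical line, rules out the alternative in which a single trapezoid abuts $p$'s wall and is therefore seen first by only one point. This branch is what separates $a$'s corridor, lying above, from $b$'s corridor, lying below. The separating structure, together with the caps of the traversed trapezoids, bounds $a$ from below and $b$ from above, which is precisely the edge $e$ below $a$ and the edge $e'$ above $b$; when a single edge performs both roles, for instance the edge incident to $p$ when $p$ is a left endpoint, we obtain $e = e'$, which is the asserted possible coincidence.

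For uniqueness I would invoke that the vertical decomposition is a deterministic function of $P$ and $M$: the downward ray from $a$ meets a unique first edge and the upward ray from $b$ meets a unique first edge, so $e$ and $e'$ are themselves determined by $M$ and the positions of $a$ and $b$. Since every edge of a matching has a single left endpoint, the left endpoints of $e$ and $e'$ are then uniquely determined points of $P$, as claimed. I expect the main obstacle to be the existence half of the argument, namely formalizing ``the corridors branch at $p$'' so as to exclude the degenerate possibilities that a relevant ray runs to infinity or that $a$ and $b$ collapse into a single first-seer; this is the geometric heart and will require a short case analysis according to whether $p$ is isolated, a left endpoint, or a right endpoint, with general position doing the work in each case. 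The coincidence clause and the uniqueness clause should then follow as immediate corollaries.
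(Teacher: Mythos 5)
Your existence argument is broadly in the same spirit as the paper's (the paper argues directly that if no edge separated \(a\) from \(b\), then one of them would see the other's wall before seeing \(p\), contradicting that \(p\) is the \emph{first} point seen; your ``corridors branch at \(p\)'' discussion reaches the same place less directly), but the uniqueness half of your proposal misses the actual content of the lemma. You argue that since the vertical decomposition is a deterministic function of \(P\) and \(M\), the first edge hit by the downward ray from \(a\) is unique, hence so is its left endpoint. That is a tautology: of course \(M\) determines the edge below \(a\). What the lemma is really asserting --- and what the paper's proof establishes, and what every later use of the lemma requires (it is invoked in Lemma 2.5 and Theorem 3.4 precisely to say the edge is ``ready for removal and reconstruction'' in a charging argument) --- is that the endpoints \(u,u'\) of \(e\) can be recovered from the configuration \emph{after \(e\) has been deleted}, i.e.\ from \(M\setminus\{uu'\}\) together with the positions of \(a\) and \(b\). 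That reconstructibility is what makes the edge-removal map injective; ``\(e\) is determined by \(M\)'' gives no way to recover \(e\) once it is gone.

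The step you would need to supply is the paper's central observation: in \(M\), no isolated vertex is visible from \(a\) going left before the wall of the bifurcation point is reached (by definition that wall is the first one seen); after deleting \(uu'\), the floor of \(a\)'s leftward corridor opens downward and some set \(U\) of isolated vertices becomes visible, and \(u\) must be the topmost element of \(U\) --- for if some isolated vertex of \(U\) lay above \(u\), its wall would already have been visible from \(a\) in \(M\) itself, since deleting an edge below \(a\) does not alter the trapezoids above it, contradicting the choice of bifurcation point. Hence \(u\) is identified as the highest isolated vertex newly visible from \(a\), and the symmetric argument identifies \(u'\) on the right. Your proof, as written, never examines the configuration with \(e\) removed, so it proves only the trivial reading of the statement and cannot support the lemma's later applications.
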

			\begin{proof}
				Let the bifurcation point to the left be \(q\) and to the right be \(q'\). Let two points \(a\) and \(b\) be described above. Without loss of generality let \(a\) be above \(b\). Consider the edge directly below \(a\). Such edge exists because \(a\) and \(b\) must be separated by at least one edge, or either \(a\) sees \(b\) before seeing bifurcation point or \(b\) sees \(a\) before seeing bifurcation point.
				
				We remove this edge and prove that the two endpoints of this edge are unique. Denote the left endpoint of this edge \(u\) and right endpoint of this edge \(u'\). Consider going from point \(a\) to the left through trapezoids until reaching bifurcation point \(q\). We immediately see that there are no isolated vertices left of \(u\) before bifurcation point \(q\).
				
				If we look at the configuration after removing edge \(uu'\), we see a set \(U\) of isolated vertices when going from vertex \(a\) to the left until bifurcation point \(q\). We claim that \(u\in U\) is above the rest of such points in the set. This is true because removing edge \(uu'\) which is below \(a\) do not effect trapezoids above. If there are such isolated vertices above \(u\in U\), \(a\) will see this vertex before seeing bifurcation point \(q\) before removing \(uu'\), which is a contradiction.
				
				The same argument applies to the right side for the right endpoint \(u'\). Thus it is safe to uniquely determine the two endpoints and reconstruct the edge after removing it. 
			\end{proof}
			
		It is then natural to consider cases where no two points see the same bifurcation point both to the left and to the right.
		
		\begin{lem}
			For a set of \(k\) isolated vertices that are separated by edges directly below and above, with no two vertices seeing the same constellation and thus no edges ready for removal, we obtain at least \(k+1\) distinct bifurcation points (either to the left or to the right).
		\end{lem}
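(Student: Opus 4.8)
The plan is to reduce the statement to a purely combinatorial comparison between the two families of bifurcation points and then close it with an extremal (leftmost-point) argument. First I would record what the hypotheses buy us. Since each of the $k$ isolated vertices is sandwiched between an edge directly above it and an edge directly below it, and since every edge of $M$ is a finite segment whose $x$-interval contains that of the vertex, the horizontal strip emanating from such a vertex $v$ is bounded on both sides; moving left (resp.\ right) through adjacent trapezoids one must therefore meet a vertical wall, and every such wall passes through a vertex of $P$. Hence each of the $k$ vertices possesses both a left and a right bifurcation point. Equivalently, the edges-above-and-below condition forces none of the $k$ vertices to be the globally leftmost or rightmost point of $P$, since a globally extreme isolated vertex cannot lie under or over an edge that spans its $x$-coordinate. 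Writing $v_1,\dots,v_k$ for the vertices and $\ell_1,\dots,\ell_k$, $r_1,\dots,r_k$ for their left and right bifurcation points, all $2k$ of these points exist.

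Next I would invoke Lemma 2.2 in contrapositive form. The assumption that no edge is ready for removal means exactly that no two of the $k$ vertices share a left bifurcation point and no two share a right bifurcation point: if two did, Lemma 2.2 would produce the (uniquely reconstructible) edge lying below the upper vertex and above the lower one, which could then be removed. Consequently $r_1,\dots,r_k$ are pairwise distinct (and likewise the $\ell_i$). The whole statement now reduces to exhibiting one bifurcation point lying outside $\{r_1,\dots,r_k\}$, for then adjoining it to these $k$ distinct points already gives $k+1$.

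To produce such a point I would use the extremal vertex. Let $v^\ast$ have smallest $x$-coordinate among $v_1,\dots,v_k$, with left bifurcation point $\ell^\ast$. Since a left bifurcation point lies strictly left of its vertex, $x(\ell^\ast) < x(v^\ast) \le x(v_i)$ for every $i$; and since a right bifurcation point lies strictly right of its vertex, $x(r_i) > x(v_i) \ge x(v^\ast) > x(\ell^\ast)$ for every $i$. Thus $\ell^\ast$ differs in $x$-coordinate from every $r_i$, so $\ell^\ast \notin \{r_1,\dots,r_k\}$, and $\{r_1,\dots,r_k,\ell^\ast\}$ furnishes $k+1$ distinct bifurcation points.

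The step I expect to be the main obstacle is the first one: making precise, within the vertical-decomposition machinery, that the sandwiching edges really do force both bifurcation points to exist, so that no degenerate boundary vertex slips into the set. This matters because a $k=2$ configuration with a globally leftmost and a globally rightmost isolated vertex would leave only two bifurcation points and defeat the bound; the argument genuinely rests on ruling such configurations out via the edges-above-and-below hypothesis. Once existence is secured, the remaining two steps are routine given Lemma 2.2 and the strict left/right ordering of bifurcation points.
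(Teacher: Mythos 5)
Your step 2 misreads the hypothesis, and the proof collapses there. You claim that ``no edge is ready for removal'' forces the right bifurcation points $r_1,\dots,r_k$ to be pairwise distinct (and likewise the $\ell_i$). But Lemma 2.2 only says that when two vertices share a \emph{left} bifurcation point, the \emph{left} endpoint of the separating edge is uniquely determined; an edge is ready for removal (i.e.\ removable and uniquely reconstructible) only when \emph{both} of its endpoints are pinned down, which requires the upper vertex to share its left bifurcation point with some vertex below it \emph{and} its right bifurcation point with some (possibly different) vertex below it. This is exactly how the paper's own induction derives its contradiction (``$a$ must see the same bifurcation point as some point $b$ below $a$ to the left, and the same bifurcation point as some point $c$ below $a$ to the right. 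This determines endpoints $u$ and $u'$\dots''), and it matches both the sentence preceding the lemma (``no two points see the same bifurcation point both to the left and to the right'') and the complementary case treated in Theorem 3.5. Under this correct, weaker hypothesis, two of the $k$ vertices may perfectly well share a right bifurcation point --- only one endpoint of the edge between them is then determined, so nothing is ready for removal --- and then $|\{r_1,\dots,r_k\}| < k$. Your extremal point $\ell^\ast$ adds one element to a set that may already be far smaller than $k$, and the count fails.

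The paper avoids this by inducting on $k$: adding a new top vertex $a$, it argues that $a$ must contribute at least one bifurcation point (left \emph{or} right) not already counted, since contributing none on either side would determine both endpoints of the edge directly below $a$ and make that edge ready for removal. That ``at least one new point per vertex, on whichever side works'' bookkeeping is the idea your argument is missing; a fixed one-sided family plus one extra point cannot substitute for it. (Your step 1, establishing that the sandwiching edges force both bifurcation points to exist, is fine and is implicitly used by the paper; step 3's observation that the leftmost vertex's left bifurcation point cannot coincide with any right bifurcation point is also correct, but it is applied to a set whose size you have not controlled.)
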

			\begin{proof}
				 We prove this by induction. The case \(k=2\) is trivial. Suppose it holds for \(k\) vertices. Without loss of generality consider adding a new vertex \(a\) above all \(k\) vertices. We obtain \(k+1\) points separated by edges below and above. Denote the edge directly below \(a\) edge \(uu'\). If no new bifurcation points are found for \(a\), \(a\) must see the same bifurcation point as some point \(b\) below \(a\) to the left, and the same bifurcation point as some point \(c\) below \(a\) to the right. This determines endpoints \(u\) and \(u'\) which forces edge \(uu'\) ready for removal, thus a contradiction. Thus the lower bound holds for all vertices of number \(k\).
			\end{proof}
		
		We are interested in the number of \textit{good points} in each constellation. Here we say a point \(p\) is good when it sees either isolated bifurcation points to the left or to the right, which forces \(H_3(p)\leq 22\) and \(H_4(p )\leq 41\). We establish the final set of lemmas before going into specific cases.
		
		\begin{defn}
			A constellation (to the left or to the right of a point) with bifurcation edge(i.e. bifurcation point is matched) is good if the bifurcation edge ``sees" all the isolated points in the constellation. ``Seeing" means all the isolated vertices lie between two endpoints of the bifurcation edge horizontally. A constellation is bad if it's not good.
		\end{defn}
		
		\begin{lem}
			If a point \(p\) sees a bad constellation, The bifurcation edge can be removed, leaving the bifurcation point and the other endpoint the the bifurcation edge unmatched. A edge emanating from the other endpoint can be uniquely reconstructed. 
		\end{lem}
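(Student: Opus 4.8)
The plan is to reuse the removal-and-reconstruction template of Lemma 2.2, with the failure of the ``seeing'' condition in Definition 2.3 supplying the asymmetry that makes the reconstruction unique. Fix an orientation, say \(p\) looks to the left, and write the bifurcation edge as \(e=(q,w)\), where \(q\) is the bifurcation point of \(p\) and \(w\) is the other endpoint. Since the constellation is bad, there is an isolated vertex \(z\) of the constellation whose \(x\)-coordinate lies outside the horizontal span of \(e\), so that \(z\) protrudes beyond \(q\) or beyond \(w\); I would fix this \(z\) at the outset, because every uniqueness claim below is traced back to it. Removing \(e\) leaves \(q\) and \(w\) isolated and produces a matching \(M'\) with \(|M'|=|M|-1\); the task is to show that \(e\) is forced by \(M'\) together with the identity of the endpoint \(w\).

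First I would recover the other endpoint \(w\) when it is not already tracked as data. After deleting \(e\) the vertical walls it generated vanish, so certain previously hidden isolated vertices become mutually visible; the key is the monotonicity already exploited in Lemma 2.2, that removing an edge does not disturb the trapezoids lying strictly above (resp. below) it. Using this I would argue that \(w\) is the unique extremal---topmost or bottommost according to the side on which \(z\) protrudes---isolated vertex of the newly exposed region: any competing choice would, after re-insertion, let \(p\) or \(z\) see past the intended bifurcation point, contradicting either the definition of \(q\) as the first point \(p\) sees or the badness witnessed by \(z\).

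With \(w\) pinned down, recovering \(q\) and hence \(e\) is the unique-partner step. The point \(q\) must be the single vertex for which \(wq\) is a crossing-free edge of the trapezoidal structure that simultaneously restores \(q\) as the bifurcation point of \(p\) and keeps \(z\) protruding; once the side of \(w\) has been fixed, Lemma 2.2 shows such an endpoint is determined by the extremal-isolated-vertex rule, so \(q\) is unique. Re-inserting \(wq\) then returns exactly \(M\), which is what ``uniquely reconstructed'' asserts.

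I expect the uniqueness of \(w\) to be the main obstacle: the delicate cases are those in which the deletion of \(e\) exposes several isolated vertices at once, or in which a different bifurcation edge of some other point could have yielded the same \(M'\). Both must be excluded by showing that the protruding vertex \(z\) forces a single protrusion side and therefore a single extremal choice; this is exactly where the hypothesis that the constellation is \emph{bad}, rather than merely that its bifurcation point is matched, does the real work. I would spend most of the effort making this forcing argument airtight, treating separately the boundary configuration in which \(e\) coincides with the edge lying directly above or below \(z\).
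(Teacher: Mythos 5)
Your overall strategy is the one the paper uses: delete the bifurcation edge, argue that badness pins down its non-bifurcation endpoint, and then fall back on the reconstruction template of Lemma 2.2. Two divergences are worth flagging. First, your identification rule for the other endpoint is not the paper's: you take \(w\) to be the vertical extremum (topmost or bottommost, according to the side on which the witness \(z\) protrudes) of a ``newly exposed region,'' whereas the paper identifies \(q'\) by its position in the sequence of isolated vertices encountered when walking through trapezoids away from \(q\) (``the \(i\)-th closest isolated vertex to \(q\)''). Neither rule is actually verified --- the paper never specifies \(i\), and you explicitly defer the forcing argument that a single extremal choice survives --- so in both texts the central uniqueness claim is asserted rather than proved; your candour about where the difficulty sits is to your credit, but the proposal as written does not close it, and you have not said relative to whom the ``newly exposed region'' is exposed (to \(p\), to \(q\), or to \(z\)), which matters for whether the extremal rule is even well defined. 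Second, your closing step of ``recovering \(q\)'' is superfluous: \(q\) is the bifurcation point of \(p\), i.e.\ the first point \(p\) sees, and deleting the edge does not change this, so \(q\) is given data in \(M'\) and only \(q'\) needs to be located --- the paper accordingly spends no effort on it. The one genuinely load-bearing observation you do make, that badness supplies a protruding vertex \(z\) and that every uniqueness claim must be traced back to it, is exactly the lever the paper's proof relies on.
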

			\begin{proof}
				We consider removing the bifurcation edge \(qq'\) where \(q\) is the bifurcation point. We claim that if the constellation is bad, \(q'\) is uniquely determined. Indeed, if a constellation is bad, the bifurcation edge \(qq'\) can not see all the isolated vertices in the constellation. Thus, after removing edge \(qq'\), the point \(q'\) can be uniquely identified as the \(i^{th}\) closest isolated vertex to bifurcation point \(q\) when going through trapezoids. Thus, the similar argument in Lemma 2.2 can be applied to conclude that edge \(qq'\) is ready for removal and reconstruction.
			\end{proof}
		
		\begin{lem}
			We say a vertex is charged by a constellation if the point contributes to the rank of the constellation. We claim that each good point can be charged by at most two different good constellations that realize \(L_3\) or \(R_3\) greater than 10 and \(L_4\) or \(R_4\) greater than 17. 
		\end{lem}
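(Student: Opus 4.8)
The plan is to reduce the statement to a per-direction uniqueness claim and then establish that claim using the removability arguments already developed. First I would observe that, by Proposition 2.1, any constellation realizing \(L_3>10\) (respectively \(L_4>17\)) must have a \emph{matched} bifurcation point, since an isolated left bifurcation point forces \(L_3\le 10\) and \(L_4\le 17\) at once. Hence every good constellation meeting either stated threshold carries a genuine bifurcation edge, and by Definition 2.3 this edge sees all isolated vertices of the constellation. Since every constellation opens either to the left or to the right of its center, and since the two sides are symmetric, it suffices to show that a fixed good point \(q\) can be charged by at most one leftward-opening good constellation meeting the threshold; the symmetric bound on the right then yields the total of two, and the same bifurcation-edge reduction covers both the rank-\(3\) and rank-\(4\) thresholds simultaneously.

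Next I would pin down what it means for \(q\) to be charged by a leftward-opening good constellation centered at an isolated vertex \(p\). In this case \(p\) lies to the right of \(q\), the vertex \(q\) is vertically visible from the interior of some low-rank insertion edge emanating left from \(p\), and the leftward bifurcation edge \(q_{b}q_{b}'\) of \(p\) spans \(q\) horizontally. The goal is to prove that \(p\) is then uniquely determined by \(q\) and \(M\). I would exploit the edges directly above and below \(q\): as in Lemma 2.2 these exist and separate \(q\) from its vertical neighbours, and the edge that a leftward insertion from \(p\) must pass over or under is pinned by these separating edges. The intended conclusion is that \(p\) is forced to be the unique isolated vertex reachable from \(q\) by moving right through the horizontal channel bounded by \(q\)'s adjacent edges and terminating at the spanning bifurcation edge.

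To turn this into a uniqueness proof I would assume two distinct centers \(p_1\neq p_2\), each hosting a leftward-opening good constellation that charges \(q\) and meets the threshold, and order them (say \(p_1\) above \(p_2\)). Both have bifurcation edges spanning \(q\), and both see \(q\) to the left. Mirroring the induction step of Lemma 2.3, one of the two centers must then see the other center, or see a common isolated vertex, strictly before reaching its own bifurcation point; this contradicts either the first-point minimality of Definition 2.2 or the spanning requirement of Definition 2.3. When the spanning requirement fails the constellation is in fact bad, and Lemma 2.4 reconstructs an edge for removal, again collapsing the two cases to one. Either way \(p_1=p_2\), giving at most one leftward charge.

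The step I expect to be the main obstacle is exactly this per-side uniqueness. The visibility relation ``\(p\) sees \(q\) to the left'' is not obviously a function of \(q\), because several isolated vertices to the right of \(q\) can each be vertically visible to \(q\); the real content is that the goodness of the constellation, namely the bifurcation edge spanning all isolated vertices, together with the high-rank threshold collapses these candidates to a single center. I would therefore spend most of the effort verifying that the spanning condition, not merely vertical visibility, is what determines \(p\), leaning on the uniqueness-of-endpoints phenomenon of Lemma 2.2 and the removability of bad constellations from Lemma 2.4.
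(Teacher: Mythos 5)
Your opening step coincides with the paper's: you invoke Proposition 2.1 in contrapositive form to conclude that any constellation meeting the stated thresholds has a matched bifurcation point, hence a bifurcation edge which, by the goodness condition, spans every isolated vertex it charges. From there, however, you and the paper part ways, and the divergence opens a genuine gap. The paper counts charges by the \emph{vertical} position of the bifurcation edge relative to the charged point \(q\): the bifurcation edges of distinct good constellations are distinct non-crossing segments all spanning \(q\) horizontally, so the only ones that can actually charge \(q\) are the edge immediately above \(q\) and the edge immediately below \(q\), giving at most two. You instead partition by the \emph{horizontal} direction in which the constellation opens and reduce everything to the claim that at most one leftward-opening good constellation can charge \(q\). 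That reduction is the problem: the per-side uniqueness claim appears to be false. Take \(q\) isolated with an edge \(e\) directly above it and an edge \(e'\) directly below it, both spanning \(q\); an isolated center \(p_1\) to the right of and above \(e\) whose left bifurcation edge is \(e\), and an isolated center \(p_2\) to the right of and below \(e'\) whose left bifurcation edge is \(e'\), can both host good leftward-opening constellations whose lower (resp.\ upper) part contains \(q\). Then \(q\) is charged twice from the left with \(p_1\neq p_2\), and no contradiction with Definition 2.2 or Definition 2.3 arises, because \(p_1\) and \(p_2\) are separated by edges, never see one another, and have distinct bifurcation points. Your appeal to ``mirroring the induction step of Lemma 2.3'' does not rescue this, since that lemma's contradiction requires the two centers to share a bifurcation point on each side, which they need not do here.

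The repair is to abandon the left/right split and argue as the paper does: a good constellation meeting the threshold can charge \(q\) only through a bifurcation edge that is vertically adjacent to \(q\), there are at most two such edges (one above, one below), and distinct good constellations carry distinct bifurcation edges. You correctly flagged the per-side uniqueness as the main obstacle; the reason it resists proof is that it is not the right invariant, and the direction in which a constellation opens is simply not what limits the multiplicity of charging.
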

			\begin{proof}
				Notice that for constellations that realize the value of \(L_3\) or \(R_3\) greater than 10, and \(L_4\) or \(R_4\) greater than 17, the bifurcation points for each constellation must be matched and bifurcation edge exists.
				
				Since each good constellation has distinct bifurcation edges which see all the points, it is now obvious that the points can only be charged at most two times from above and below by two good constellations.
			\end{proof}
			
	\section{Obtaining a Correspondence}
		\subsection{\(H_3 = 4h_0 + 3h_1 + 2h_2 + h_3\)}
			By symmetry we first consider \(L_3\) that realizes the value of \(H_3\) to the left. By enumeration we see that three values of \((l_0, l_1, l_2, l_3)\) realize the value of \(L_3 = 12\), which are \((0, 2, 2, 2)\), \((0, 1, 3, 3)\), \((0, 0, 4, 4)\). The values that realize \(L_3 = 11\) are \((0, 0, 3, 5)\), \((0, 0, 4, 3)\), \((0, 1, 2, 4)\), \((0, 1, 3, 2)\).
			
			\begin{prop}
				The number of good points in each constellation that realizes \(L_3\) or \(R_3\) being 12 is 4. The number of good points in each constellation that realizes \(L_3\) or \(R_3\) being 11 is at least 2.
			\end{prop}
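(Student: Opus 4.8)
By symmetry between the two sides it suffices to treat \(L_3\), so throughout I would fix \(p\) on the right and reason about leftward insertions. The plan begins with a reduction. Every tuple in the list realizes \(L_3 = 11\) or \(12\), which exceeds the bound \(L_3 \le 10\) of Proposition 2.1; hence the contrapositive of that proposition forces the left bifurcation point \(q\) of \(p\) to be matched, so a bifurcation edge \(qq'\) is present and \(l_0 = 0\), consistent with every listed tuple. I would then reconstruct each constellation by sweeping the leftward insertions in order of increasing rank: the tuple \((l_0,l_1,l_2,l_3)\) dictates exactly how many isolated vertices and right endpoints the interior of the inserted edge must cross as its target moves away from \(p\). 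This pins down the arrangement of contributor points and separating edges, and the endpoints of those edges (in particular of \(qq'\)) are uniquely located by Lemma 2.2, so only finitely many reconstructions must be examined.

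The second step is to certify the good points among the contributors. The key structural fact I want to exploit is that the contributors form a set of \(k\) isolated vertices separated by edges directly above and below, which is precisely the hypothesis of Lemma 2.3. Applying Lemma 2.3 yields at least \(k+1\) distinct bifurcation points, and I would argue that, once the two supplied by the matched bifurcation edge \(qq'\) are accounted for, the remaining bifurcation points are isolated vertices of the constellation itself. Any contributor whose own bifurcation point is isolated is good by Proposition 2.1, since its \(L_3\) or \(R_3\) is then at most \(10\) and so \(H_3 \le 22\). Running this count through the three extremal tuples \((0,2,2,2)\), \((0,1,3,3)\), \((0,0,4,4)\) at \(L_3 = 12\) I expect to certify exactly four such contributors in each, while the looser tuples \((0,0,3,5)\), \((0,0,4,3)\), \((0,1,2,4)\), \((0,1,3,2)\) at \(L_3 = 11\) should yield at least two.

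The delicate point, and the step I expect to be the main obstacle, is ruling out that a candidate contributor has a matched bifurcation point whose bifurcation edge sees all the isolated points, i.e. that its constellation is \emph{good} in the sense of Definition 2.4 rather than having the isolated bifurcation I need. When the candidate's bifurcation constellation is instead \emph{bad}, Lemma 2.6 lets me remove and uniquely reconstruct the offending bifurcation edge, and I would use this to show that such a candidate cannot sit in the reconstructed arrangement without collapsing one of the separating edges into a removable one, contradicting the assumed rank profile. This case analysis is exactly what upgrades the clean lower bound of ``at least two'' for \(L_3 = 11\) into the exact count of four for \(L_3 = 12\): at \(L_3 = 12\) the geometry is rigid enough to force an isolated bifurcation for every one of the four contributors, whereas the slack in the \(L_3 = 11\) profiles only guarantees two. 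Finally I would assemble the totals tuple by tuple, listing the contributors, marking those certified good by the previous steps, and verifying the counts; since the enumeration is finite and each reconstruction is unique, the remaining work is purely the geometric bookkeeping of which separating edge lies between which pair of contributors and whether the resulting bifurcation point is isolated.
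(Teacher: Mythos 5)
There is a genuine gap: your proposal never actually establishes the claimed counts, and the structural machinery you substitute for the paper's direct inspection does not close. The paper's own proof is a finite case check — it reconstructs each of the listed extremal configurations and reads off which of the isolated vertices in it are good — and the entire content of the proposition lives in that inspection. Your write-up defers exactly that step: "I expect to certify exactly four such contributors" and "should yield at least two" are statements of expectation, and the final paragraph concedes that the decisive work is "geometric bookkeeping" still to be done. In particular, the case you yourself identify as the main obstacle — a contributor whose bifurcation point is matched and whose bifurcation edge sees all the isolated points — is never resolved; Lemma 2.6 only says a \emph{bad} bifurcation edge can be removed and reconstructed, it does not make the contributor good, and nothing you write rules out the good-constellation case. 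So the exact count of four for \(L_3=12\) and the lower bound of two for \(L_3=11\) are asserted, not proved.

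The structural route you propose in place of the inspection also misfires. Lemma 2.3 applies to a set of isolated vertices that are pairwise separated by edges directly above and below \emph{and} such that no two see the same constellation; the contributors inside a single constellation of \(p\) do not satisfy this (for instance, in the tuple \((0,0,4,4)\) several rank-\(2\) contributors sit behind the same bifurcation edge and will typically share bifurcation points, which is precisely the situation Lemma 2.3 excludes). Even granting the lemma, its conclusion is \(k+1\) \emph{distinct} bifurcation points, not \emph{isolated} ones — and in the regime \(L_3\ge 11\) the relevant bifurcation points are forced to be matched, as you note in your own first step — so the lemma cannot certify a single contributor as good. The correct fix is to do what the paper does: draw the finitely many configurations corresponding to \((0,2,2,2)\), \((0,1,3,3)\), \((0,0,4,4)\) and to the four \(L_3=11\) tuples, and verify the good-point count in each one directly. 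Your opening reduction (symmetry, \(l_0=0\), matched bifurcation point via the contrapositive of Proposition 2.1) is sound and consistent with the paper, but everything after it needs to be replaced by the explicit enumeration.
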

				\begin{proof}
					The result is immediately obtained by looking at each constellation.
				\end{proof}
				
			We first consider all constellations are good and no two points see the same constellation to the left and to the right.
			\begin{thm}
				 Consider any a set of \(k\) points \(\{p_1, p_2,..., p_k\}\), each having \(H_3(p_i, M) = x\) and separated by edges below and above, with all constellations being good and no two points seeing the same constellation. For \(x = 24\), we obtain at least \(2k + 2\) good points that uniquely corresponds to this setting. For \(x = 23\) we have at least \(k+1\) unique good points. 
			\end{thm}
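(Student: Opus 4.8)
The plan is to prove both claims by induction on $k$, following the induction of Lemma 2.3, and to read off the good points from the two constellations attached to each $p_i$. First I would record what $H_3(p_i,M)=x$ forces. Since $H_3 = L_3 + R_3$ with $L_3,R_3\le 12$, the value $x=24$ forces $L_3(p_i,M)=R_3(p_i,M)=12$, so by Proposition 3.1 both constellations of every $p_i$ are $12$-constellations, each carrying exactly $4$ good points; and since $24>22$, the remark preceding Definition 2.3 together with Proposition 2.1 shows that neither bifurcation point of $p_i$ is isolated, so every bifurcation point is matched and, the constellations being good by hypothesis, each carries a genuine bifurcation edge. For $x=23$ the only splittings are $(L_3,R_3)\in\{(12,11),(11,12)\}$, so each $p_i$ has one $12$-constellation with $4$ good points and one $11$-constellation with at least $2$ good points by Proposition 3.1, again with matched bifurcation points.

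For the base case $k=1$ I would simply invoke Proposition 3.1: the single $12$-constellation already gives $4=2\cdot 1+2$ good points when $x=24$, and the constellations give at least $2=k+1$ good points when $x=23$. For the inductive step I would add a new point $p_{k+1}$ on top of the stack, exactly as in the proof of Lemma 2.3. The hypothesis that no two points see the same constellation lets me run that argument: were $p_{k+1}$ to create no new bifurcation point, it would share a left bifurcation point with some lower $p_j$ and a right bifurcation point with some lower $p_\ell$, and Lemma 2.2 would then pin down both endpoints of the edge $uu'$ directly below $p_{k+1}$, making it ready for removal and contradicting the no-removable-edge hypothesis. Hence $p_{k+1}$ introduces at least one new bifurcation point, and with it a constellation whose bifurcation edge is distinct from all those already used.

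The crux is converting this new bifurcation data into the exact number of genuinely new good points: $2$ in the case $x=24$ and $1$ in the case $x=23$. Here I would lean on the charging Lemma 2.5, which bounds by two the number of good constellations charging any single good point. Since $p_{k+1}$ sits above the whole stack, its new $12$-constellation charges its good points from above, and a locality argument, namely that this constellation's bifurcation edge lies strictly outside the region controlled by the previously processed constellations, should show that at least two of its four good points are not yet accounted for when $x=24$, and at least one of the $11$-constellation's good points when $x=23$. Combining the $2k+2$ (resp. $k+1$) count from the inductive hypothesis with this $+2$ (resp. $+1$) increment then closes the induction.

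Finally, to justify that the good points \emph{uniquely} correspond to the setting, I would argue injectivity: by the reconstruction arguments of Lemma 2.2 and Lemma 2.4, the bifurcation edges, which are determined by the good points and the ambient trapezoidization, can be removed and uniquely rebuilt, so the configuration of the $p_i$ and their constellations is recoverable from the produced good points, and two distinct settings cannot yield the same good-point set. The main obstacle I anticipate is precisely the locality step above, quantifying the overlap between the new constellation and the old ones tightly enough to guarantee exactly two (resp. one) new good points rather than zero, since a priori the charging bound alone permits all four good points of the new constellation to have already been charged once from below.
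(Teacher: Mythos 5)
Your reading of what $H_3(p_i,M)=x$ forces is the same as the paper's, and your use of Lemma 2.3 to extract $k+1$ distinct constellations matches the first step of the paper's proof. But you then package the count as an induction on $k$, asking each newly added point to contribute $2$ (resp.\ $1$) genuinely new good points, and the ``locality step'' you yourself flag as the main obstacle is a real gap: the charging bound of Lemma 2.5 does not prevent all four good points of the new top constellation from already having been charged once each by constellations lower in the stack, in which case your per-step increment is $0$ and the induction fails. Closing that hole would require a separate geometric argument about where the new bifurcation edge sits relative to the previously used ones, and no such argument is available in the paper or sketched in your proposal.

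The paper avoids this entirely by replacing the induction with a single global double count. Lemma 2.3 gives $k+1$ distinct constellations; for $x=24$ each realizes $L_3=12$ or $R_3=12$ and hence carries $4$ good points, so there are at least $4(k+1)$ incidences between these constellations and good points; since by the charging lemma each good point is charged by at most two good constellations, the number of distinct good points is at least $4(k+1)/2=2k+2$. For $x=23$ each of the $k+1$ constellations carries at least $2$ good points by Proposition 3.1, giving $2(k+1)/2=k+1$. This is exactly the aggregate version of the bound you were trying to establish step by step, and once the incidences are counted globally the question of how many \emph{new} good points each constellation contributes never arises. You should drop the induction and state the argument in this double-counting form; your observations about the forced splittings of $x$ into $L_3+R_3$ and the matched bifurcation points can be kept as they stand.
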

				\begin{proof}
					By Lemma 2.3, we know that we obtain at least \(k+1\) distinct constellations and thus \(k+1\) distinct bifurcation points. Since all constellations realize \(L_3\geq 11\) and \(R_3\geq 11\), we obtain \(k+1\) distinct bifurcation edges.
					
					Since for each constellation that realizes \(L_3=12\) or \(R_3=12\), we have \(4\) good points which (by Lemma 2.4) can only be charged at most twice, we obtain \(\frac{4(k+1)}{2}=2k+2\) distinct good points for \(k+1\) such constellations. Similarly we obtain \(2\) good points for each constellation that realizes \(L_3=11\) or \(R_3=11\). Thus there are \(\frac{2(k+1)}{2}=k+1\) distinct good points.
				\end{proof}
			
			We have now obtained a unique correspondence between any \(k\) points that realizes \(H_3=24\) and \(2k+2\) good points with \(H_3\leq 22\). Also we obtain a unique correspondence between any \(k\) points that realizes \(H_3=23\) and \(k+1\) good points with \(H_3\leq 22\). Compare the average over all values of \(k\) and take the maximum, we obtain an average of \(\frac{68}{3}\)
			
			Thus \(\sum_{|M| = m - 1, P}4h_0 + 3h_1 + 2h_2 + h_3\leq \frac{68}{3}(s+2)|\textbf{Ma}_{m-1}(P)|\). This is true under the assumption that each point in \(P\) sees a good constellation and no two points in \(P\) see the same constellation to the left and right.
			
			We are then ready to look at the case when a point \(p\in P\) sees a bad constellation(either to the left or to the right).

			\begin{thm}
				Consider any point \(p\in P\) that sees the bad constellation to the left or to the right. We have a unique correspondence between a constellation $M$ with \(H_3(p,M) = 23\) or \(24\) and two or more distinct constellations \(M'\) with \(H_3(p,M') \leq 22\). That is, for each such constellation $M$ we can map $M$ to two or more constellations $M'$ and such map is surjective. 
			\end{thm}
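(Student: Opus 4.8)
The plan is to reduce the bad-constellation case to the explicit edge-removal operation of Lemma 2.5 and to read off the two required target constellations from the very failure of the bifurcation edge to span the constellation. First I would fix, without loss of generality by the left--right symmetry of $L_3$ and $R_3$, a ving $(p,M)$ with $p$ isolated, $H_3(p,M)\in\{23,24\}$, in which $p$ sees a bad constellation to the left with bifurcation point $q$ and bifurcation edge $qq'$. Since $H_3(p,M)\geq 23$, the bifurcation point must be matched: if it were isolated then $p$ would be good and Proposition 2.1 would give $H_3(p,M)\leq 22$. Hence $qq'$ genuinely exists and Lemma 2.5 applies, so removing $qq'$ leaves $q$ and $q'$ unmatched and pins down $q'$ uniquely as the appropriately-ranked isolated vertex seen from $q$.

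The key observation I would exploit is that \emph{badness supplies the room to move}. Because the constellation is bad, at least one isolated vertex of the constellation lies outside the horizontal span of $qq'$; after deleting $qq'$ the bifurcation point $q$ is itself isolated, so by Proposition 2.1 the point $p$ has become good on its left side and therefore any matching obtained this way has $H_3(p,\cdot)\leq 22$. To stay at matching size $m-1$ I would then re-insert one edge incident to a freed endpoint. I claim there are at least two distinct safe re-insertions arising from the freedom that badness provides — for instance re-attaching the freed endpoints $q$ and $q'$ in the two available directions, or routing to two distinct unseen isolated vertices of the constellation — each of which keeps $p$ isolated and leaves $p$'s left constellation good. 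This produces distinct constellations $M'_1,M'_2$ with $H_3(p,M'_1),H_3(p,M'_2)\leq 22$, giving the required ``two or more'' images; two suffices both for the value-$24$ grouping $\tfrac{24+22+22}{3}=\tfrac{68}{3}$ and, a fortiori, for the value-$23$ case.

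Surjectivity — equivalently, that the reverse correspondence is well defined, so each target is charged by at most one source — is where I would lean entirely on the uniqueness half of Lemma 2.5 together with Lemma 2.2. Given any target $M'$ produced in this way, the re-inserted edge can be located and the original bifurcation edge $qq'$ reconstructed uniquely as the $i$-th closest isolated vertex to $q$, so that $M$ is recovered from $M'$ plus the bounded bookkeeping of which re-insertion was used; consequently no target constellation is the image of two different sources. Combined with the ``$\geq 2$'' count this yields an injection of each value-$23$-or-$24$ bad ving into a private pair of value-$\leq 22$ vings, which is exactly what the averaging needs.

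The step I expect to be the main obstacle is the middle one: verifying that the re-insertions are simultaneously (i) size-preserving, (ii) $p$-isolating, and (iii) rank-reducing to $H_3(p,M')\leq 22$ rather than merely $\leq 24$. The delicate point is that deleting $qq'$ and adding a new edge alters the vertical visibility from $p$, so I must check that the new left bifurcation point of $p$ stays isolated — keeping Proposition 2.1 in force — while not accidentally creating a fresh bad configuration on the right. Arranging the choice of re-insertion to coincide exactly with the inversion guaranteed by Lemma 2.5 is what ties (iii) to the surjectivity argument, and is the crux of the proof.
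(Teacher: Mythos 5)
Your overall scaffolding matches the paper's: remove the bad bifurcation edge $qq'$ (justified by the bad-constellation removal lemma), re-insert one edge to stay at size $m-1$, force the bound $H_3(p,M')\leq 22$ by making $p$'s bifurcation point isolated so that Proposition 2.1 applies, and get surjectivity from unique reconstructibility. But the step you yourself flag as the crux is where your proposal diverges from the paper and, as written, fails. You propose to obtain the ``two or more'' images by re-attaching the freed endpoints $q$ and $q'$ ``in the two available directions'' or routing them to unseen isolated vertices. If the re-inserted edge is incident to $q$, the bifurcation point of $p$ is matched again in $M'$, Proposition 2.1 no longer applies, and your fallback claim that the new constellation is merely \emph{good} buys nothing: good constellations realize $L_3=12$, so goodness alone does not force $H_3\leq 22$. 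Moreover the existence of two distinct \emph{crossing-free} re-insertions at $q'$ is asserted, not proved; you never address why the new edge avoids the existing matching.

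The paper's proof closes both holes differently. The re-inserted edge is not incident to $q$ or $q'$ at all: it joins a good point $a$ of the constellation to its closest isolated vertex $a'$, so $q$ stays isolated in $M'$ and Proposition 2.1 applies directly. The multiplicity ``two or more'' then comes from Proposition 3.1 (every constellation realizing $L_3$ or $R_3$ equal to $11$ or $12$ contains at least two good points), giving at least two choices of $a$ and hence at least two distinct targets $M'$. Crossing-freeness of the new edge $aa'$ is verified explicitly by the triangle argument on $\triangle paa'$. Your surjectivity paragraph is essentially the paper's, but to repair the proposal you would need to replace the $q,q'$ re-attachment mechanism with the good-point insertion and import the counting from Proposition 3.1; without that, the ``$\geq 2$'' count and the bound $H_3(p,M')\leq 22$ are both unsupported.
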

				\begin{proof}
					Consider a point \(p\) such that \(H_3(a,M) = 23\) or \(24\). WLOG let \(p\) see bad constellation to the right. We claim that \(M'\) can be uniquely constructed from \(M\) by the following procedure: remove bifurcation edge \(qq'\) and insert an edge emanating from any good point \(a\) in the constellation to its closest vertex \(a'\) to the right. Thus the resulting \(M'\) leaves the bifurcation point of \(p\) to the right isolated, and \(H_3(p,M') \leq 22\).
					
					It is important to show that we can indeed connect \(a\) to \(a'\). Suppose there is an edge blocking the connection of \(a\) to \(a'\) and we denote the left endpoint of this edge point \(v\) and the right endpoint of this edge point \(v'\). Notice that \(p\) can be matched to both \(a\) and \(a'\). Thus we know that \(v\) must be within the triangle area \(\bigtriangleup paa'\). This is a direct contradiction to the assumption that no left endpoints are left of point \(p\) other than those shown in the constellation.
					
					We claim that there is a unique correspondence between each distinct \(M'\) and \(M\). This is indeed true because \(a\) is good point in the constellation and thus its closest vertex(to the right) is an isolated vertex shown in the constellation. Thus both \(a\) and \(a'\) can be uniquely identified. By Lemma 2.4, the bifurcation edge \(qq'\) can also be removed and uniquely reconstructed.
					
					Note that by Proposition 3.1, the number of good points that realizes \(R_3 = 11\) or \(12\) is at least 2. Thus we have at least two distinct choice of the good point \(a\).
				\end{proof}
				
				This improves our upper-bound of this case to \(\frac{68}{3}\) and thus we obtain $\sum_{|M| = m - 1, P}4h_0 + 3h_1 + 2h_2 + h_3\leq \frac{68}{3}(s+2)|\textbf{Ma}_{m-1}(P)|$ for point \(p\in P\) that sees a bad constellations.
			
			We look at the final case when points see same constellations and an edge is obtained ready for removal and reconstruction.
			
			\begin{thm}
				For any point \(p\in P\) that sees same constellation to the left and right as some other points, we obtain a unique correspondence between \(H_3(p,M) = 23\) or \(24\) and at least two distinct \(M'\) with \(H_3(p,M') \leq 22\)
			\end{thm}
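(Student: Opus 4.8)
The plan is to run an argument closely parallel to that of Theorem 3.3, with the edge supplied by Lemma 2.2 playing the role that the bifurcation edge played there. Assume without loss of generality that \(p\) sees the same constellation to both sides as a second point, and take \(p\) to be the upper of the two. Since \(p\) and this second point share both their left and right bifurcation points, Lemma 2.2 furnishes a uniquely determined edge \(uu'\) directly below \(p\) that is ready for removal: its left endpoint \(u\) and right endpoint \(u'\) are pinned down by \(M\) and the positions of the two points, so that deleting \(uu'\) is a reversible operation. This reversibility is the feature I would rely on to recover \(M\) at the end.

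First I would build the image matchings \(M'\). Starting from \(M\), delete \(uu'\) — isolating \(u\) and \(u'\) — and then insert one new edge from a good point \(a\) of the constellation to its closest vertex \(a'\) on the relevant side, so that \(M'\) again has \(m-1\) edges. Whichever side realizes \(R_3 = 11\) or \(12\) (equivalently, the side forcing \(H_3(p,M) = 23\) or \(24\)) contains at least two good points by Proposition 3.1, giving at least two admissible choices of \(a\) and hence at least two distinct matchings \(M'\). Legality of each insertion I would establish exactly as in Theorem 3.3: a blocking edge with left endpoint \(v\) would force \(v\) into the triangle \(\bigtriangleup paa'\), contradicting that no left endpoints besides those displayed in the constellation lie to that side of \(p\). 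The intention of the construction is that, once \(uu'\) is gone, \(p\) sees an isolated bifurcation point on at least one side, so that \(p\) becomes a good point of \(M'\) and \(H_3(p,M') \leq 22\); I return to this point below.

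To finish I would check that \(M \mapsto M'\) is a genuine correspondence, i.e. that \(M\) is recoverable from each \(M'\) in its image. The inserted edge \(aa'\) is identifiable because \(a\) is good and \(a'\) is, by construction, its closest isolated vertex; deleting \(aa'\) returns to the configuration in which \(u\) and \(u'\) are isolated, after which Lemma 2.2 reconstructs \(uu'\) uniquely and recovers \(M\). Distinct good points \(a\) produce distinct inserted edges and therefore distinct images, so \(M\) corresponds to at least two distinct \(M'\), each of strictly smaller weight, as claimed. The step I expect to be the main obstacle is the rank claim \(H_3(p,M') \leq 22\): unlike in Theorem 3.3, the edge removed here is not itself a bifurcation edge, so one must verify carefully that deleting \(uu'\) (which lies below \(p\)) together with the reinsertion really does leave a bifurcation point of \(p\) isolated, rather than creating a new matched obstruction that keeps \(p\)'s rank high. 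Making this visibility bookkeeping precise — and pinning down that it is exactly the shared-constellation hypothesis behind Lemma 2.2 that guarantees it — is where the work of the proof concentrates.
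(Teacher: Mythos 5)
Your proposal takes essentially the same route as the paper's own (very terse) proof: invoke Lemma 2.2 to obtain the uniquely reconstructible edge directly above or below \(p\) in place of the bifurcation edge, then rematch good points of the shared constellation exactly as in Theorem 3.3. The one issue you flag --- that removing \(uu'\) rather than a bifurcation edge still needs to be shown to leave \(p\) with \(H_3(p,M')\leq 22\) --- is a genuine gap, but the paper's two-sentence proof does not address it either, so your write-up is if anything the more careful of the two.
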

				\begin{proof}
					The proof is straightforward. The edge immediately above (or below) \(p\) is ready for removal and reconstruction as implied by Lemma 2.2. We obtain \(M'\) by finding good points in the constellation that \(p\) sees and match them similar to the case in Theorem 3.3. 
				\end{proof}
			
			Again the upper-bound is improved to \(\frac{68}{3}\). Notice that the three cases sum up all possible situations for \(p\in P\) and therefore we know that 
			\begin{equation}
				\textbf{Ma}_{m}(P)\leq
				\frac{\frac{68}{3}(s+2)}{2n-6s}\textbf{Ma}_{m-1}(P).\\
			\end{equation}
		\subsection{\(H_4 = 5h_0 + 4h_1 + 3h_2 + 2h_3 + h_4\)}
			We similarly look at the number of good points in constellations that realizes specific values of \(L_4\) or \(R_4\).
			\begin{lem}
				Constellations that realize \(R_4\geq 21\) are the combination of two constellations, at least one of which can realizes \(R_3\geq 11\) below and above the bifurcation edge. In particular the combinations when \(R_4 = 21\) can be \(10 + 11\) or \(9 + 12\), which gives at least \(2\) good points in each case. It also holds for \(L_4\) and \(L_3\).
			\end{lem}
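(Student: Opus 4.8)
The plan is to reduce first to the case of a matched bifurcation point, then to establish an additive decomposition of \(R_4\) across the bifurcation edge, and finally to finish with a short pigeonhole argument together with Proposition 3.1. To begin, note that \(R_4 \geq 21 > 17\), so by the contrapositive of Proposition 2.1 the right bifurcation point of \(p\) cannot be isolated; hence a bifurcation edge \(qq'\) exists. This is exactly what lets us speak of the sub-constellations lying above and below \(qq'\), and it is consistent with Lemma 3.5 asserting a decomposition along a bifurcation edge rather than an isolated bifurcation point.

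The core step I would prove is the identity \(R_4 = R_3^{\,above} + R_3^{\,below}\), where \(R_3^{\,above}\) (resp.\ \(R_3^{\,below}\)) is the \(R_3\)-value of the sub-constellation lying above (resp.\ below) \(qq'\) with \(qq'\) serving as its base. The mechanism mirrors the rank-shift already used in Proposition 2.1. Because \(qq'\) blocks vertical visibility, every right-insertion from \(p\) terminates at an isolated vertex that lies strictly above or strictly below \(qq'\), and these two families of insertions are disjoint and exhaust all admissible insertions. An insertion into the upper region sees the relevant endpoint of the bifurcation edge together with only the upper isolated vertices and left endpoints, so its full rank \(i\) equals \(1\) plus its internal rank \(j\) within the upper sub-constellation; consequently its \(R_4\)-weight \((4-i)\) equals the \(R_3\)-weight \((3-j)\) of the corresponding insertion in the sub-constellation. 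Summing over the two disjoint families, and noting that no right-insertion can attain rank \(0\) once the first thing \(p\) meets is the matched edge \(qq'\), yields the additive identity with no leftover constant term.

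With the decomposition in hand, the remainder is short. Since the maximum value of \(R_3\) is \(12\) (from the enumeration at the start of Section 3.1), we have \(R_3^{\,above}, R_3^{\,below} \leq 12\); if both were at most \(10\) we would get \(R_4 \leq 20 < 21\), so at least one side realizes \(R_3 \geq 11\), which is the general claim. Specializing to \(R_4 = 21\) and writing \(21 = a + b\) with \(0 \leq a,b \leq 12\) and \(\max(a,b)\geq 11\), the only unordered possibilities are \(9 + 12\) and \(10 + 11\), since \(8 + 13\) is excluded by \(13 > 12\). In the \(9+12\) split the side realizing \(R_3 = 12\) contributes \(4\) good points by Proposition 3.1, and in the \(10+11\) split the side realizing \(R_3 = 11\) contributes at least \(2\); either way at least \(2\) good points result. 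The corresponding statement for \(L_4\) and \(L_3\) then follows from the left--right symmetry already noted at the start of Section 2.

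The main obstacle I anticipate is the decomposition \(R_4 = R_3^{\,above} + R_3^{\,below}\): one must argue carefully that \(qq'\) partitions the admissible insertions into exactly two visibility-separated families and that its presence shifts every rank uniformly by one, so that each side contributes precisely its \(R_3\)-value rather than some other weighting. The delicate points are verifying that the relevant endpoint of \(qq'\) is vertically visible from, and hence counted by, every insertion passing over or under it, and confirming that insertions grazing the endpoints of \(qq'\) are neither double-counted nor dropped. Once this clean partition and uniform rank-shift are secured, the pigeonhole on \(21 = a+b\) and the appeal to Proposition 3.1 are entirely routine.
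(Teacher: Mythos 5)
Your proposal is correct and follows essentially the same route as the paper: deduce from \(R_4\geq 21>17\) that the bifurcation point is matched, decompose \(R_4\) into two \(R_3\) sub-constellations above and below the bifurcation edge, and pigeonhole using \(R_3\leq 12\) together with Proposition 3.1. The only difference is that you spell out the rank-shift argument justifying the additive decomposition, which the paper merely asserts in one sentence (and note the paper's weights are \(5-i\) versus \(4-j\) under its shifted indexing, though the cancellation works identically).
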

				\begin{proof}
					For \(R_4\geq 21\geq 18\), the bifurcation point is matched and thus \(r_0 = 0\). We obtain \(4r_1 + 3r_2 + 2r_3 + r_4\geq 21\) which is realized by two \(R_3\) constellations above and below bifurcation edge. Since each \(R_3\leq 12\) we readily obtain the result and thus we have at least \(2\) good points.. By symmetry the same argument applies to \(L_4\) and \(L_3\). 
				\end{proof}
			
			Thus we are now ready to derive improved bounds for \(H_4\) in the same way as we do to improve bounds for \(H_3\).
			
			\begin{thm}
				Consider any a set of \(k\) points \(\{p_1, p_2,..., p_k\}\), each having \(H_4(p_i, M) = x\) and separated by edges below and above, with all constellations being good and no two points seeing the same constellation. For \(x \geq 45\), we obtain at least \(k + 1\) good points that uniquely correspond to this setting. 
			\end{thm}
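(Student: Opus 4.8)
The plan is to rerun the counting argument of Theorem 3.2, now driven by the structural decomposition of $H_4$-constellations from the preceding lemma. I would begin with the arithmetic that makes the hypothesis $x \geq 45$ useful. Writing $H_4 = L_4 + R_4$, each side is bounded by $24$: when the bifurcation point of a side is isolated the side is at most $17$ by Proposition 2.1, and when it is matched the side splits, by the preceding lemma, into two $R_3$- (resp. $L_3$-) constellations above and below the bifurcation edge, each at most $12$. Hence $L_4, R_4 \leq 24$, and from $L_4 + R_4 = x \geq 45$ I get $L_4 \geq 21$ and $R_4 \geq 21$ simultaneously. So for every point $p_i$ both its left and its right constellation realize a value $\geq 21$.

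Next I would apply Lemma 2.3: since no two of the $k$ points see the same constellation and all constellations are good, there are at least $k+1$ distinct bifurcation points and hence $k+1$ distinct constellations. Each is the left or right constellation of some $p_i$, so each realizes $L_4 \geq 21$ or $R_4 \geq 21$. Because $21 > 17$, Proposition 2.1 forces the bifurcation point of each to be matched, giving $k+1$ distinct bifurcation edges; and by the preceding lemma every constellation realizing a value $\geq 21$ contains at least two good points (the split $9+12$ or $10+11$ always leaves one side realizing $R_3 \geq 11$, which carries at least two good points by Proposition 3.1).

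I would then close the count exactly as in Theorem 3.2. By Lemma 2.4 each good point is charged by at most two of these good constellations, so with $k+1$ constellations each supplying at least two good points the number of distinct good points is at least $\frac{2(k+1)}{2} = k+1$. Since every good point realizes $H_4 \leq 41 < 45$, none of them coincides with any $p_i$, and the reconstruction half of Lemma 2.2 (for the edges separating the $p_i$) together with the reconstruction guaranteed by the preceding lemma (for the bifurcation edges) makes the map from this configuration to the $k+1$ good points well-defined, which is what ``uniquely correspond'' requires.

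The step I expect to be the main obstacle is the clean guarantee of two good points per constellation in the extremal case. For $H_3$ this was immediate from Proposition 3.1, but for $H_4$ the value $\geq 21$ yields two good points only through the two-sided decomposition, and I would need to check that the smallest admissible value $R_4 = 21$ --- not merely the symmetric maximum $R_4 = 24$ --- still produces them. This matters because the target count is exactly $k+1$: it is this bound, averaged over $k$ in the sequel, that drives the $H_4$-coefficient down to $44.5$, so any loss of a good point in the worst case would break the intended improvement.
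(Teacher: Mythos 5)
Your proposal is correct and follows essentially the same route as the paper's proof: deduce $L_4\geq 21$ and $R_4\geq 21$ from $x\geq 45$ and the bound of $24$ per side, invoke Lemma 2.3 for $k+1$ distinct constellations, Lemma 3.5 for two good points in each, and the charging bound of Lemma 2.4 to conclude with $\frac{2(k+1)}{2}=k+1$ good points. You merely spell out the arithmetic and the matched-bifurcation-point step that the paper leaves implicit.
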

				\begin{proof}
					The proof is similar to that of Theorem 3.2. It is worth noticing that for \(x\geq 45\), we obtain \(L_4\geq 21\) and \(R_4\geq 21\). In either case, by Lemma 3.5 we have at least \(2\) good points in a constellation which can be charged at most twice, and since there are \(k+1\) distinct constellations, we obtain \(k + 1\) good points.
				\end{proof}
			
			Therefore the bound can be averaged to \(\frac{41 + 48}{2} = 44.5\). For points that see bad constellation, we have:
			\begin{thm}
				Consider any point \(p\in P\) that sees the bad constellation to the left or to the right. We have a unique correspondence between a constellation $M$ such that \(H_4(p,M) \geq 45\) and at least two distinct constellations \(M'\) with \(H_4(p,M') \leq 41\).
			\end{thm}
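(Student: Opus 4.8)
The plan is to run the argument of Theorem 3.3 one level higher, using Lemma 3.5 in place of Proposition 3.1. First I would record the structural consequence of the hypothesis: writing \(H_4(p,M)=L_4(p,M)+R_4(p,M)\) and using the one-sided bound \(L_4,R_4\le 24\) that underlies the proof of the preceding theorem, the assumption \(H_4(p,M)\ge 45\) forces the value realized on the bad side to be at least \(21\). Taking without loss of generality the bad constellation to be on the right, we obtain \(R_4(p,M)\ge 21\), so by Lemma 3.5 that right constellation contains at least two good points.

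Next I would construct the map \(M\mapsto M'\) exactly as in Theorem 3.3: delete the bifurcation edge \(qq'\), where \(q\) is the right bifurcation point of \(p\), and insert the edge from a chosen good point \(a\) of the right constellation to its nearest vertex \(a'\) on the right. I would check legality of the insertion by the triangle argument of Theorem 3.3 — any edge obstructing \(aa'\) would place a left endpoint inside \(\triangle paa'\), contradicting that the only left endpoints reachable there are those recorded in the constellation. I would then argue that after the swap the deleted bifurcation point \(q\) is isolated and remains the first point \(p\) sees to the right, so \(p\) becomes a good point and hence \(H_4(p,M')\le 41\) by the defining property of good points.

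To make this a genuine correspondence I would verify that each \(M'\) determines \(M\) uniquely. Because \(a\) is good, its nearest right neighbour \(a'\) is one of the isolated vertices listed in the constellation, so the pair \((a,a')\) — and hence the inserted edge — is recoverable from \(M'\); and because the right constellation is bad, the bad-constellation removal lemma (Lemma 2.4) uniquely reconstructs the deleted bifurcation edge \(qq'\). Thus the map is injective on each fixed choice of \(a\) and surjective onto the targeted family of \(M'\). Finally, Lemma 3.5 supplies at least two distinct good points \(a\), and distinct \(a\) produce distinct inserted edges and hence distinct images \(M'\), which yields the ``at least two'' claim.

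The step I expect to be the main obstacle is confirming that the inserted edge \(aa'\), which lies deeper inside the right constellation than \(q\), does not change what \(p\) sees first to the right, so that \(q\) genuinely becomes \(p\)'s isolated bifurcation point and \(p\) is certified good with \(H_4(p,M')\le 41\). A related subtlety is ruling out collisions among the outputs: I must ensure that the two choices of \(a\) give distinct \(M'\) and that neither coincides with a configuration already produced on the other (\(L_4\)) side, so the correspondence does not overcount.
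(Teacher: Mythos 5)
Your proposal takes essentially the same route as the paper: the paper's own proof merely says it is ``similar to that of Theorem 3.3,'' and you have carried out that similarity explicitly, invoking Lemma 3.5 to guarantee the good points and Lemma 2.4 for unique reconstruction of the bifurcation edge. The only divergence is that the paper asserts at least four good points (hence at least four images \(M'\)) without justification, whereas you establish only the two guaranteed by Lemma 3.5 --- but two is all the stated theorem requires.
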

				\begin{proof}
					The proof is similar to that of Theorem 3.3. Notice that in this case we have at least four good points in a constellation and thus we obtain at least four distinct \(M'\) with \(H_4(p,M') \leq 41\).
				\end{proof}
			And the bound can be averaged to \(\frac{2*41 + 48}{3} = \frac{130}{3}<44.5\). Finally we derive the improved bound for the case when points see the same constellation.
			
			\begin{thm}
				For any point \(p\in P\) that sees same constellation to the left and right as some other points, we obtain a unique correspondence between a constellation $M$ such that \(H_4(p,M) \geq 45\) and at least two distinct constellations \(M'\) with \(H_4(p,M') \leq 41\)
			\end{thm}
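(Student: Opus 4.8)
The plan is to adapt the argument of Theorem 3.4 to the \(H_4\) setting, replacing the role of the bad-constellation machinery of Theorem 3.3 with the reconstruction guarantee of Lemma 2.2. First I would use the hypothesis that \(p\) sees the same constellation to the left and to the right as some other point: by Lemma 2.2 the edge \(uu'\) immediately above (or below) \(p\) has its left and right endpoints uniquely determined by \(M\) and the positions of the points sharing the constellation, so this edge is ready for removal and reconstruction. I fix \(uu'\) as the edge to delete when passing from \(M\) to \(M'\).

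Next I would invoke the rank arithmetic to locate the good points. Since \(H_4(p,M) \geq 45\) forces both \(L_4 \geq 21\) and \(R_4 \geq 21\), Lemma 3.5 supplies at least two good points in the constellation that \(p\) sees. For each good point \(a\) I would construct a target matching \(M'\) by removing \(uu'\) and inserting the edge from \(a\) to its nearest isolated vertex \(a'\) on the appropriate side, exactly as in the constructions of Theorems 3.3 and 3.7. Because \(a\) is good, its nearest neighbor \(a'\) is an isolated vertex displayed in the constellation, and the triangle argument of Theorem 3.3 shows no edge blocks the segment \(aa'\), so the insertion is always legal. Deleting the bifurcation edge lowers the rank contribution at \(p\), yielding \(H_4(p,M') \leq 41\). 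Running over the two (or more) good points produces at least two distinct \(M'\).

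It then remains to establish that the correspondence \(M \mapsto \{M'\}\) is unique in the required surjective sense. I would argue that in each \(M'\) the inserted edge \(aa'\) is recoverable because both \(a\) (a good point) and its match \(a'\) (its closest isolated vertex in the constellation) are uniquely identifiable, and that by Lemma 2.2 the deleted edge \(uu'\) can be uniquely reconstructed; hence \(M\) is recovered from each \(M'\), giving the claimed two-to-one (or better) surjection and thus the averaged bound \(\frac{2\cdot 41 + 48}{3} < 44.5\).

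The main obstacle I anticipate is verifying that the two operations do not interfere: I must check that after removing \(uu'\) the nearest isolated vertex seen from a good point \(a\) is still the same vertex \(a'\) of the original constellation, so that the reconstruction of the inserted edge stays unambiguous even though the bifurcation edge has also been deleted. Here I expect to lean on the disjointness encoded in Lemma 2.4 — the good points are charged by the constellation through trapezoids that are separated from the region governing \(uu'\) — so that deleting \(uu'\) leaves the trapezoidal neighborhood determining \(a'\) intact. Confirming this non-interference, rather than the counting, is the delicate step.
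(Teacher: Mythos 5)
Your proposal follows essentially the same route as the paper, which simply defers to the argument of Theorem 3.4: use Lemma 2.2 to mark the edge above or below \(p\) as ready for removal and reconstruction, then produce the matchings \(M'\) by matching good points as in Theorem 3.3, with Lemma 3.5 supplying the two good points in the \(H_4\) setting. Your writeup is in fact more detailed than the paper's one-line proof (and your flagged non-interference concern is a real subtlety the paper does not address), but the underlying approach is the same.
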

				\begin{proof}
					The prove is same as the proof for Theorem 3.4.
				\end{proof}
			The bound is again being averaged to \(\frac{2*41 + 48}{3} = \frac{130}{3}<44.5\). Combining the three cases we readily obtain: 
			\begin{equation}
			\textbf{Ma}_{m}(P)\leq
			\frac{44.5(s+2)}{3n-7s}\textbf{Ma}_{m-1}(P).\\
			\end{equation}
\section{Reference}
[1]Micha Sharir and Emo Welzl, “On the number of crossing-free matchings, cycles, and partitions”, \textit{SIAM J. Comput.} 36 (2006), 695-720.\newline
\end{document}